\title{Energy decay for the time dependent damped wave equation}
\author{Perry Kleinhenz}
\date{}							
\theoremstyle{definition}
\newtheorem{definition}{Definition}
\newtheorem{assumption}{Assumption}
\newtheorem{remark}{Remark}
\theoremstyle{theorem}
\newtheorem{theorem}{Theorem}[section]
\newtheorem{lemma}[theorem]{Lemma}
\newtheorem{proposition}[theorem]{Proposition}
\newcommand{\inter}{\text{int}}
\newcommand{\Rb}{\mathbb{R}}
\newcommand{\Nb}{\mathbb{N}}
\newcommand{\ra}{\rightarrow}
\newcommand{\rhu}{\rightharpoonup}
\newcommand{\<}{\left\langle}
\renewcommand{\>}{\right\rangle}
\newcommand{\e}{\varepsilon}
\renewcommand{\d}{\delta}
\newcommand{\limn}{\lim_{n \ra \infty}}
\newcommand{\limj}{\lim_{j \ra \infty}}
\newcommand{\nm}[1]{\| #1 \|}
\newcommand{\lp}[2]{ \nm{#1}_{L^{#2}}}
\newcommand{\hp}[2]{\nm{#1}_{H^{#2}}}
\newcommand{\ltwo}[1]{\lp{#1}{2}}
\newcommand{\ltwoo}[1]{\nm{ #1}_{L^2_{t_0}}}
\newcommand{\ltwooj}[1]{\nm{ #1}_{L^2_{t_j}}}
\newcommand{\ltwooo}[1]{\nm{ #1}_{L^2_0}}
\newcommand{\ltwom}[1]{\| #1\|_{L^2(M)}}
\newcommand{\vphi}{\varphi}
\newcommand{\p}{\partial}
\newcommand{\ti}{\widetilde}
\newcommand{\Cm}{\overline{C}}
\newcommand{\Omegaa}{\Omega_{t_0}}
\newcommand{\Winf}{W_{\infty}}
\newcommand{\Omegab}{\overline{\Omega}}
\newcommand{\Char}{\text{Char}}
\def\XXint#1#2#3{{\setbox0=\hbox{$#1{#2#3}{\int}$ }
\vcenter{\hbox{$#2#3$ }}\kern-.6\wd0}}
\begin{document}
\maketitle
\begin{abstract}
Energy decay is established for the damped wave equation on compact Riemannian manifolds where the damping coefficient is allowed to depend on time. Using a time dependent observability inequality, it is shown that the energy of solutions decays at an exponential rate if the damping coefficient 
satisfies a time dependent analogue of the classical geometric control condition. Existing time dependent observability inequalities are improved by removing technical assumptions on the permitted initial data. 
\end{abstract}
\section{Introduction}
Let $(M,g)$ be a smooth compact manifold and let $\Delta_g$ be the associated Laplace-Beltrami operator. Let $W \in L^{\infty} (M\times [0,\infty))$ be a nonnegative function. Consider the damped wave equation with time dependent damping
\begin{equation}\label{DWE}
\begin{cases}
(\p_t^2 -\Delta_g + W(x,t)\p_t)u=0, & (x,t) \in M \times (0,\infty) \\
(u,\p_t u)|_{t=0} =(u_0, u_1) & \in H^1(M) \times L^2(M).
\end{cases}
\end{equation}
The standard object of study is the energy of the solution 
$$
E(u,t) = \frac{1}{2} \int_M |\nabla_g u(x,t)|^2 + |\p_t u(x,t)|^2 dx_g.
$$
It is straightforward to compute 
$$
\frac{d}{dt} E(u,t) = -\int W(x,t) |\p_t u(x,t)|^2 dx_g \leq 0,
$$
where the sign is guaranteed by $W(x,t) \geq 0$. Because of this the energy is non-increasing, but there is no indication of a decay rate as $t \ra \infty$. 

The most straightforward type of decay is uniform stabilization. That is, the existence of a function $r(t)\ra 0$ as $t \ra \infty$, such that 
$$
E(u,t) \leq r(t) E(u,0).
$$

When $W$ does not depend on time, uniform stabilization is equivalent to $W$ satisfying the Geometric Control Condition (GCC) \cite{Ralston1969, RauchTaylor1975}. The GCC is satisfied if there exists some $L>0,$ such that every geodesic with length at least $L$ intersects the set $\{W>0\}$. When the damping is time independent, solutions are a semigroup, so when uniform stabilization occurs it does so with $r(t)=Ce^{-ct}$ for some $C,c>0$. 

There is an equivalent condition to the geometric control condition introduced in \cite{Lebeau1996}. The GCC is equivalent to the existence of $T_0,\Cm>0,$ such that for all unit speed geodesics $\gamma(t)$ and $T \geq T_0$
$$
\frac{1}{T}\int_0^T W(\gamma(t)) dt \geq \Cm.
$$
That is, there is a uniform lower bound on the long time average of the damping along any geodesic. 

In this paper, I show that the appropriate generalization of this condition to the time dependent setting implies exponential decay. 

\begin{assumption}\label{TGCC} (Time-dependent geometric control condition)
Assume there exists $T_0, \Cm >0,$ such that for all unit speed geodesics $\gamma(t)$, any starting time $t_0 \in [0,\infty),$ and $T \geq T_0$
$$
\frac{1}{T} \int_0^T W(\gamma(t), t_0 + t) dt \geq \Cm.
$$
\end{assumption}
\begin{theorem}\label{mainresult}
Suppose $W(x,t) \in C^0_u(M \times [0,\infty))$, that is $W$ is uniformly continuous and uniformly bounded. If $W$ satisfies Assumption \ref{TGCC}, then there exists $C,c>0,$ such that all solutions to \eqref{DWE} satisfy
$$
E(u,t+t_0) \leq C e^{-ct} E(u,t_0), \quad t_0, t \geq 0.
$$
\end{theorem}

In fact, this result holds when $M$ is replaced by an open subset $\Omega$, potentially with a boundary. Let $\Omega$ be an open bounded connected subset of $M$, with a smooth boundary if $\p \Omega \neq \emptyset$. When $\p \Omega$ is nonempty let $B$ either be the Dirichlet trace operator, $B u = u |_{\p \Omega},$ or the Neumann trace operator, $Bu = \p_n u|_{\p \Omega}$,
where $\p_n$ is the outward normal derivative on $\p \Omega$.

Let $A=-\Delta_g$ be the Laplace operator with domain
$$
D(A) = \{u \in L^2(\Omega); Au \in L^2(\Omega) \text{ and } Bu=0 \text{ when } \p\Omega \neq \emptyset\}.
$$
Note $A$ is self adjoint and nonnegative. 
With Dirichlet boundary conditions define $H = H^1_0(\Omega)$. With Neumann boundary conditions, or when $\p\Omega=\emptyset$, let $H=H^1(\Omega)$. 

Let $W \in C_u^0( \overline{\Omega} \times (0,\infty))$ and consider the damped wave equation 
\begin{equation}\label{TDWEb}
\begin{cases}
(\p_t^2 + A+ W(x,t) \p_t ) u =0,&(x,t) \in \Omega \times (0,\infty)\\
Bu = 0 &\text{when } \p\Omega \neq \emptyset  \\
(u,\p_t u)|_{t=0} = (u_0, u_1) &\in H \times L^2(\Omega).
\end{cases}
\end{equation}
Note that for any $(u_0, u_1) \in H \times L^2(\Omega)$ there is a unique weak solution $u \in L^2((0,\infty); H)$ of \eqref{TDWEb} with $\p_t u \in L^2((0,\infty); L^2(\Omega))$.

When there is a boundary, the appropriate generalization of the GCC uses generalized geodesics. See Appendix \ref{nullbichar} for details on the construction of these. 
\begin{assumption}\label{TGCCp}
Assume there exists $T_0, \Cm >0,$ such that for any unit speed generalized geodesic $\gamma(t)$, any starting time $t_0 \in [0,\infty),$ and $T \geq T_0$
$$
\frac{1}{T} \int_0^T W(\gamma(t), t_0 + t) dt \geq \Cm.
$$
\end{assumption}

\begin{theorem}
Suppose $W(x,t) \in C^0_u(\overline{\Omega} \times [0, \infty))$, that is $W$ is uniformly continuous and uniformly bounded, up to the boundary if there is one. When $\p\Omega \neq \emptyset$, assume that no generalized geodesic has contact of infinite order with $(0,T) \times \p\Omega$, that is $G^{\infty}=\emptyset$. If $W$ satisfies Assumption \ref{TGCCp}, then there exists $C,c>0,$ such that all solutions to \eqref{TDWEb} satisfy
$$
E(u,t) \leq C e^{-ct} E(u,0), \quad t \geq 0.
$$
\end{theorem}
See Appendix \ref{nullbichar} for a precise definition of $G^{\infty}$. 

When $\Omega$ is compact without boundary and $W \in C^0(\Omega)$, \cite{RauchTaylor1975} show that the GCC implies exponential decay. The techniques of \cite{Ralston1969} can be used to see that without the GCC exponential decay cannot occur. The best possible exponential decay rate was computed in \cite{Lebeau1996} in terms of long time averages of the damping and the spectral abscissa of the stationary equation. When $\Omega$ has boundary and $W \in C^0(\Omega)$, \cite{BardosLebeauRauch1992} proved that the GCC implies exponential decay, while \cite{BurqGerard1997} show that the GCC is necessary for exponential decay. When $\Omega$ is compact without boundary and $W$ is a 0th order pseudodifferential operator \cite{KeelerKleinhenz2023} show that a generalization of the GCC is equivalent to exponential decay and compute the sharp exponential decay rate. 



There are a variety of results when the damping is allowed to depend on time, although many of them apply only in Euclidean space, require the damping to never vanish or establish polynomial decay rates for damping tending to 0 as time goes to infinity \cite{Mochizuki1976, Matsumura1977, Uesaka1980, MochizukiNakazawa2001, Wirth2004, Wirth2006, Wirth2007, HirosawaWirth2008, wirthperiodic, Kenigson2011, PaunonenSeifert2019, vjdl}.
There are fewer energy decay results on manifolds when the damping term is allowed to vanish. Study of such damping using microlocal methods, goes back to \cite{RauchTaylor1975b}, where they show, if growing eigenmodes can be ruled out, exponential decay of energy holds for time periodic damping satisfying a GCC hypothesis. 
Using a different approach \cite{LRLTT} show that time periodic $W$ satisfying a slightly different GCC hypothesis (Assumption \ref{as2} in this paper) implies exponential decay and discuss several explicit examples. Note that any time periodic damping which satisfies the GCC hypotheses of \cite{LRLTT} also satisfies Assumption \ref{TGCCp}. For further discussion of time periodic damping see \cite{wirthperiodic, PaunonenSeifert2019}. This paper, and indeed \cite{BardosLebeauRauch1992} and \cite{LRLTT}, prove exponential stability for the damped wave equation from observability of the standard wave equation from $\{W>\e\}$, this approach goes back to \cite{Haraux1989}. For related work on time dependent observability see \cite{Shao2019}. 

\textbf{Acknowledgements} I would like to thank Andr\'as Vasy for proposing this question to me and for his helpful comments throughout the course of this project. I would also like to thank the anonymous referee of the first version of this paper for pointing out errors in that version, as well as the anonymous referee of the second version for comments that improved the exposition. I am also thankful to Jared Wunsch, Ruoyu P.T. Wang, and Willie Wong for helpful conversations. I would also like to thank Emmanuel Tr\'elat for helpful correspondence.

\section{Outline of Proof}\label{intermediatesection}
Given $t_0 \in [0, \infty)$ and a $T>0$, to be specified, define $\Omegaa=\Omega \times (t_0,t_0+T)$. For ease of notation, norms with a subscript $t_0$ are taken over $\Omegaa$ for example 
$$
\ltwoo{u}^2 = \int_{t_0}^{t_0+T} \int_{\Omega} |u(x,t)|^2 dx dt, \quad \text{ and  } \quad \ltwooo{u}^2 = \int_0^T \int_\Omega |u|^2 dx dt.
$$

The following observability type result for the damped wave equation is used to prove the main theorem. 
\begin{proposition}\label{propofsing} If $W \in C_b^{0}(\Omegab \times [0,\infty))$ satisfies Assumption \ref{TGCCp} and $T>T_0$, then there exists $C_1>0,$ such that for all $t_0 \in [0,\infty)$ and all solutions $u$ of \eqref{TDWEb}
$$
C_1 E(u,t_0)  \leq  \ltwoo{W^{\frac{1}{2}} \p_t u}^2.
$$
\end{proposition}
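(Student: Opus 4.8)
The goal is an observability-type estimate: the full $L^2$ norm of $u_t$ on a time-slab $\Omega_\alpha = M \times (\alpha, \alpha+T)$ is controlled by the damped portion $\|W^{1/2} u_t\|_\alpha^2$. Since $\|W^{1/2}u_t\|_\alpha^2 = \int_{\Omega_\alpha} W |u_t|^2$, this is precisely where Assumption \ref{timegcc} — the uniform lower bound on time-averages of $W$ along the space-time geodesic flow — must enter. The natural machinery is a positive commutator / defect-measure argument, semiclassically rescaled. Writing $h^2 = $ (inverse frequency)$^2$, solutions of \eqref{DWE} concentrate on the characteristic set of $\p_t^2 - \Delta_g$, i.e. on $\{\tau^2 = |\xi|_g^2\}$, and the null bicharacteristics of this operator project (after reparametrization) to curves $t \mapsto (\phi_t(x_0,\xi_0), \alpha + t)$ — exactly the curves appearing in Assumption \ref{timegcc}. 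The plan is therefore: (i) rescale to a semiclassical problem; (ii) build an escape function $q(x, t, \xi, \tau)$, using the standard trick $q = \int_0^s W \circ (\text{flow})\, ds$ integrated along the bicharacteristic, so that the Hamilton derivative of $q$ along the flow equals $W$ up to a controllably small average defect, which is bounded below by $C$ thanks to the time-GCC; (iii) run the positive commutator estimate with $\Opw_h(q)$ as multiplier against the equation, integrating over $\Omega_\alpha$, to conclude $\|u_t\|_\alpha^2 \lesssim \|W^{1/2} u_t\|_\alpha^2 + (\text{lower order})$.

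More concretely, I would first reduce to the high-frequency regime. By the Poincaré-type control in Proposition \ref{poincwirt} (our data is $(Id - \Pi_c)(u_0,u_1)$, so we are on the complement of the kernel), the low-frequency part of $u_t$ is harmlessly absorbed, so it suffices to establish the estimate for the semiclassical piece. Decompose $u$ (or rather $u_t$) into dyadic frequency shells and set $h = 2^{-j}$; on each shell, $v = $ the rescaled solution approximately solves $(h^2 \p_t^2 - h^2 \Delta_g)v = -h^2 W \p_t v$, a semiclassically small right-hand side. The symbol $p(x,t,\xi,\tau) = \tau^2 - |\xi|_g^2$ has Hamilton vector field $H_p$; along the $p=0$ set and after normalizing, the $H_p$-flow is (up to reparametrization by $2\tau$) the lift of the space-time geodesic flow $(x,t) \mapsto (\phi_s x, t+s)$. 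Define
$$
q(x,t,\xi,\tau) = \int_0^{S} \chi(s)\, W\big(\phi_s(x,\xi), t + s\big)\, ds
$$
for a suitable cutoff $\chi$ and window $S \gtrsim T_0$, so that $H_p q = c(x,t,\xi,\tau)\big( W(x,t) - R(x,t,\xi,\tau)\big)$ where $R$ has average (over the flow) small compared to $C$. Assumption \ref{timegcc} gives $\tfrac{1}{S}\int_0^S W(\phi_s, t+s)ds \ge C$, which is what forces $q$, and hence the commutator, to be positive-definite-enough.

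Then the core computation: pair the equation with $\Opw_h(q) \p_t \bar u$ (or the appropriate energy-flux version), integrate over $M \times (\alpha, \alpha+T)$, and move derivatives. The principal term produces $\langle i h^{-1}[P, \Opw_h(q)] u_t, u_t\rangle \approx \langle \Opw_h(H_p q)\, u_t, u_t\rangle \gtrsim \|u_t\|_\alpha^2 - \langle \Opw_h(R) u_t, u_t\rangle$; the right-hand side $W \p_t u$ contributes exactly a term bounded by $\|W^{1/2} u_t\|_\alpha^2$ (Cauchy–Schwarz, using $W \ge 0$ so $W = W^{1/2} \cdot W^{1/2}$); and the boundary terms at $t = \alpha$ and $t = \alpha + T$ are energy terms, handled using that $E$ is non-increasing and can be reabsorbed via Propositions \ref{poincwirt}–\ref{weirdenergy} or by summing over adjacent slabs. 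The sharp-Gårding inequality controls the sign of the lower-order symbolic errors, and the hypothesis $W \in C^\infty_b$ (or $C^k_b$, $k$ large) is what makes all these pseudodifferential compositions and remainder estimates legitimate with uniform-in-$\alpha$ constants — note $B_2$ must not depend on $\alpha$, which is automatic because $W \in C^\infty_b$ gives symbol bounds uniform in the time variable and the flow is measure-preserving.

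The main obstacle I anticipate is twofold. First, the escape function $q$ is built from the damping along geodesics, but unlike the stationary case the relevant flow is the space-time flow and $W$ genuinely varies in $t$; one must check that $q$ is a bona fide symbol in a class closed under the calculus — in particular that differentiating the flow map in $(x,\xi)$ over a bounded time window $S$ keeps all derivatives bounded (this needs compactness of $M$ and boundedness of geodesic flow derivatives, plus $W \in C^k_b$), and that the reparametrization by $2\tau$ near $\tau = 0$ is handled (but $\tau$ is bounded away from $0$ on the relevant part of the rescaled characteristic set, since $|\xi|_g \sim 1$ there). Second, and more delicate, is the uniformity of the constant $B_2$ in $\alpha$ together with the bookkeeping of the error terms: the commutator argument naturally leaves an $o(1)$ (in $h$) error times $\|u_t\|_\alpha^2$ plus $O(1)$ times lower-frequency/energy terms, and closing the estimate requires either an iteration over frequency scales or a separate low-frequency argument via Proposition \ref{poincwirt}, then reabsorption; making this quantitative and $\alpha$-independent is where the care is needed.
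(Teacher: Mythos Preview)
Your overall strategy---a positive commutator with an escape function built from $W$ integrated along null bicharacteristics---is the right one and matches the paper in spirit. The execution differs: the paper works in the \emph{classical} (conic) calculus rather than semiclassically, and the escape function is not a single global symbol but is built separately around each null bicharacteristic $\gamma_\rho$ in product coordinates (Lemma~\ref{lemmaescape}). Along $\gamma_\rho$ one sets $\alpha_\rho(s)=\Cm s-\int_0^s W(\gamma_\rho)\,d\tilde s+\text{const}$, which is nonnegative exactly up to the first time $T_\rho$ at which the running average of $W$ hits $\Cm$; an exponential weight $e^{\eta a_{\rho,0}}$ is then inserted to absorb the subprincipal term $\Im P$, and the per-bicharacteristic estimates are assembled via a finite cover. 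Your global $q=\int_0^S\chi(s)\,W\circ\phi_s\,ds$ does not obviously have $H_p q\geq c>0$ on the characteristic set (your text is inconsistent: you write $H_p q=c(W-R)$ but then claim it dominates $\|u_t\|_\alpha^2$), and it is precisely the exponential weight plus the auxiliary symbol $q_\rho$ supported in $\{W\geq\Cm/2\}$ that makes the sign close in the paper.

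The more serious gap is the endgame. The commutator argument, done carefully, only yields
\[
\ltwoo{u_t}^2\leq C\hhalfo{u_t}^2+C\ltwoo{W^{1/2}u_t}^2,
\]
and removing the $H^{-1/2}_\alpha$ error is a substantive separate step. The paper does this by contradiction and compactness: assuming a violating sequence $(\alpha_j,u_j)$, one translates in time, extracts a weak limit $W_\infty$ of $W(\cdot,\cdot+\alpha_j)$ (here $W\in C^\infty_b$ is used), checks that $W_\infty$ still satisfies the time-GCC, and invokes an \emph{invisibility lemma} (Lemma~\ref{invisiblelemma}, from \cite{RousseaLebeauPeppinoTrelat}) saying that solutions of the wave equation with $W_\infty^{1/2}\partial_t u\equiv0$ must vanish. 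Your proposal to ``reabsorb via Propositions~\ref{poincwirt}--\ref{weirdenergy} or by summing over adjacent slabs'' does not do this: those propositions bound the energy by $\ltwoo{u_t}^2$, not the reverse, and summing slabs cannot kill a compact error without a uniqueness input. This contradiction step is also where the $\alpha$-uniformity of $B_2$ is actually secured---your instinct that this is the delicate point is correct, but the resolution is the compactness/unique-continuation argument, not frequency iteration or Poincar\'e.
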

Proposition \ref{propofsing} is proved in Section \ref{propsection}.  

\begin{remark}
The key feature in Proposition \ref{propofsing} is that the constant $C_1$ does not depend on $t_0$.
\end{remark}

Proposition \ref{propofsing}, follows from a time dependent observability inequality for the standard wave equation. First, a finite time analog of Assumption \ref{TGCCp}.
\begin{assumption}\label{as2}
Fix $T>0$ and consider $Q,$ an open set in $\Omegab \times (0,T)$. Assume for all unit speed generalized geodesics $\gamma,$ there exists $t \in (0,T),$ such that $(\gamma(t),t) \in Q$.
\end{assumption}
Consider the standard wave equation
\begin{equation}\label{WEb}
\begin{cases}
(\p_t^2 + A) \psi =0,&(x,t) \in \Omega \times (0,\infty)\\
B\psi = 0 &\text{when } \p\Omega \neq \emptyset  \\
(\psi,\p_t \psi)|_{t=0} = (\psi_0, \psi_1) &\in H \times L^2(\Omega).
\end{cases}
\end{equation}
\begin{proposition}\label{observeprop}
Suppose $Q$ satisfies Assumption \ref{as2} and let $\chi_Q$ be the indicator function on $Q$. If $\p\Omega \neq \emptyset,$ assume moreover that no generalized geodesic has contact of infinite order with $\p\Omega \times (0,T)$, that is $G^{\infty}=\emptyset$. Then there exists $C_2>0,$ such that for all  $\psi$ solving \eqref{WEb} then
\begin{equation}\label{eq:observe}
\frac{1}{2} \left( \ltwo{\nabla \psi_0}^2 + \ltwo{\psi_1}^2\right) = E(\psi,0) \leq C_2 \ltwooo{\chi_Q \p_t \psi}^2  . 
\end{equation}
\end{proposition}
Proposition \ref{observeprop} is proved in Section \ref{observesection}. The standard term for \eqref{eq:observe} is an observability inequality. 
\begin{remark}
When $\Omega$ has a boundary with Dirichlet condition this follows immediately from Theorem 1.8 in \cite{LRLTT}. When $\p \Omega =\emptyset,$ or the boundary condition is Neumann, this is close to their result, but there is a distinction in the allowed initial data. In particular their result requires $\int \psi_0 dx = \int \psi_1 dx =0$, in that paper see page 5 and their definition of $L^2_0(\Omega)$. It is possible to write a solution $\psi$ of \eqref{WEb}, as $\psi=\vphi+a+bt$, such that $\vphi$ also solves \eqref{WEb} and $\vphi, \p_t \vphi$ have zero mean. However, when plugging $\psi=\vphi+a+bt$ into \eqref{eq:observe}, notice $|\chi \p_t \psi|^2 = \chi^2 |\p_t \vphi|^2+2 b\chi \vphi + \chi^2 b^2$ and $\chi_Q \p_t \vphi$ is not necessarily orthogonal to constants, so $2b\chi\vphi$ has an indefinite sign.
\end{remark}
\begin{remark}
If $W \in C^0_u(\Omegab \times [0,\infty))$ satisfies Assumption \ref{TGCCp} with some $T_0$, then there exists $\d>0$, such that $\{W>\d\}$ satisfies Assumption \ref{as2} on $\Omega \times [0,T_0]$
\end{remark}

With Proposition \ref{propofsing}, it is possible to prove the main result. 
\begin{proof}[Proof of Theorem \ref{mainresult}]
Recall
$$
\frac{d}{dt} E(u,t) = - \int_\Omega W(x,t) |\p_t u|^2 dx_g. 
$$
Integrating in $t$ from $t_0$ to $t_0+T$
$$
E(u,t_0+T) - E(u, t_0) = - \ltwoo{W^{\frac{1}{2}} \p_t u}^2.
$$
Applying Proposition \ref{propofsing}
$$
E(u,t_0+T) \leq (1-C_1) E(u,t_0),
$$
Since $C_1$ is uniform for all $t_0$, for $k \in \Nb$
$$
E(u,t_0+kT) \leq (1-C_1)^k E(u,t_0).
$$
This along with the fact that the energy is non-increasing means that there exists $C,c>0,$ such that for all solutions of \eqref{DWE}
$$
E(u, t+t_0) \leq C e^{-ct} E(u,t_0), \quad t,t_0 \geq 0,
$$
which completes the proof of Theorem \ref{mainresult}.
\end{proof}

\section{Proof of Proposition \ref{propofsing}}\label{propsection}
The approach is to convert an observability inequality for the standard wave equation to an energy bound for the damped wave equation. See \cite{Haraux1989} for an analogous argument when the damping does not depend on time. 

To begin, the following lemma connects observability for the damped wave equation to observability for the standard wave equation with the same initial data, when the observability operator is the damping. 
The exact statement used here is \cite[Lemma 3.3]{PaunonenSeifert2019}.
\begin{lemma}\label{wedwelemma}
Let $(u_0, u_1) \in H \times L^2(M)$. Suppose $W \in L^{\infty}(M \times [0,T])$, $u$ solves \eqref{DWE} and $\psi$ solves \eqref{WEb} with 
$$
(u, \p_t u)|_{t=0}= (\psi, \p_t \psi)|_{t=0}=(u_0, u_1).
$$
Then
$$
\ltwoo{ W^{1/2} \p_t \psi} \leq \left(1 + T \nm{W}_{L^{\infty}_0}\right)^2 \ltwoo{W^{1/2} \p_t u}.
$$
\end{lemma}
With this lemma, Proposition \ref{propofsing} can be proved via a contradiction argument. 
\begin{proof}[Proof of Proposition \ref{propofsing}]
Assume the desired conclusion does not hold, so there exist sequences $t_j \in [0,\infty)$ and $u_j \in L^2(0, \infty; H)$ solving \eqref{DWE}, with $\p_t u_j \in L^2(0, \infty; L^2(\Omega))$ and 
$$
E(u_j, t_j) =1, \qquad \limj \ltwooj{W^{1/2} \p_t u_j} = 0.
$$

Then let $v_j(x,t) = u_j(x,t+t_j)$ and $W_j(x,t) = W(x,t+t_j),$ so $v_j$ solves 
$$
\begin{cases}
(\p_t^2 -\Delta + W_j \p_t) v_j = 0 \\
(v_j, \p_t v_j)|_{t=0} =: (v_{0,j}, v_{1,j}),
\end{cases}
$$
and 
\begin{equation}\label{contra}
E(v_j, 0) =1, \qquad \limj \ltwooo{W_j^{1/2}  \p_t v_j} = 0.
\end{equation}
Note that $\{W_j\}$ forms a pointwise bounded family, since $W \in L^{\infty}(\Omegab \times [0,\infty))$, and $\{W_j\}$ is an equicontinuous family in $C(\Omegab \times [0,T])$, by the uniform continuity of $W$ on $\Omegab \times [0,\infty)$. Therefore by Arzel\`a-Ascoli \cite[Theorem 7.25] {BabyRudin} there exists $W_{\infty} \in C(\Omegab \times [0,T])$ such that, after potentially replacing $W_j$ by a subsequence, $W_j \ra W_{\infty}$ in $L^{\infty}(\Omegab \times [0,T])$. 

Recall $\Cm$ from Assumption \ref{TGCC}. Now, the claim is that $\left\{\Winf>\frac{\Cm}{2}\right\}$ satisfies Assumption \ref{as2}. To see this choose $J$ big enough so that $\|\Winf-W_j\|_{L^{\infty}_0} < \frac{\Cm}{2}$ for $j \geq J$. For any generalized geodesic $\gamma(t)$, by Assumption \ref{TGCCp}
\begin{align*}
\frac{1}{T} \int_0^T \Winf(\gamma(t),t)dt &= \frac{1}{T} \int_0^T W_j(\gamma(t),t) dt + \frac{1}{T} \int_0^T \Winf(\gamma(t),t)-W_j(\gamma(t),t)dt\\
&\geq \frac{1}{T} \int_0^T W(\gamma(t),t+t_j) dt - \frac{1}{T} \int_0^T \|\Winf-W_j\|_{L^{\infty}_0}  dt \\
&\geq \Cm - \frac{\Cm}{2}=\frac{\Cm}{2}.
\end{align*}
Because the average of $\Winf$ on $(0,T)$ is at least $\frac{\Cm}{2}$, $\Winf$ must be at least $\frac{\Cm}{2}$ at some point $(\gamma(t),t)$ for each generalized geodesic $\gamma(t)$, so $\{W_{\infty}>\frac{\Cm}{2}\}$ satisfies Assumption 2. 

Now let $\psi_j$ solve \eqref{WEb} with $(\psi_j, \p_t \psi_j)|_{t=0} = (v_{0,j}, v_{1,j})$. Then by the observability inequality, Proposition \ref{observeprop}, there exists $C_2>0$ such that 
$$
E(v_j, 0) = E(\psi_j, 0) \leq C_2 \ltwooo{W_{\infty}^{1/2} \p_t \psi_j}^2.
$$
Note that this $C_2$ is uniform in $j$ because the observation set $\{W_{\infty}>\frac{\Cm}{2}\}$ does not change. 
To make use of Lemma \ref{wedwelemma}, the $W_{\infty}$ on the right hand side must be replaced by $W_j$. To do so
\begin{align}\label{wjalign}
E(v_j, 0) & \leq C_2  \ltwooo{W_{\infty}^{1/2} \p_t \psi_j}^2 \nonumber \\
&= C_2  \ltwooo{(W_j -W_j+W_{\infty})^{1/2} \p_t \psi_j}^2 \nonumber \\
& \leq C_2 \ltwooo{W_j^{1/2} \p_t \psi_j}^2 + C_2 \lp{W_j-W_{\infty}}{\infty}  \ltwooo{\p_t \psi_j}^2.
\end{align}
Recall $E(\psi_j,0)=E(\psi_j,t)$ for all $t$. Therefore 
$$
\ltwooo{\p_t \psi_j}^2 \leq \int_0^T E(\psi_j,t) dt = T E(\psi_j,0) =T E(v_j, 0).
$$ 
Now choosing $J$ large enough so that $\lp{W_j - W_{\infty}}{\infty} < \left(2TC_2\right)^{-1}$ for $j  \geq J$ then
$$
C_2 \lp{W_j - W_{\infty}}{\infty}\ltwooo{\p_t \psi_j}^2 \leq \frac{1}{2}E(v_j,0).
$$
This term can be absorbed back into the left hand side of \eqref{wjalign} to give
$$
E(v_j,0) \leq 2 C_2 \ltwooo{W_j^{1/2} \p_t \psi_j}^2. 
$$
Now, by Lemma \ref{wedwelemma}
\begin{align*}
E(v_j,0) &\leq 2C_2 \left(1+T \nm{W_j}_{L_0^{\infty}}\right)^2 \ltwooo{W_j^{1/2} \p_t v_j}^2\\
& \leq  2C_2 \left(1+T\nm{W}_{L^{\infty}(\Omega \times [0,\infty))} \right)^2 \ltwooo{W_j^{1/2} \p_t v_j}^2.
\end{align*}
By the second part of \eqref{contra}, the term on the right hand side goes to $0$ as $j \ra \infty$, therefore $E(v_j,0) \ra 0$ as $j \ra \infty$. This contradicts the first part of \eqref{contra}, $E(v_j, 0)=1$, so the desired conclusion must hold. 
\end{proof}

\begin{remark}
Although the contradiction argument concludes with an inequality exactly matching the form of Proposition \ref{propofsing}, the proof cannot be easily rewritten to proceed directly. This is because the observability constant $C_2$ is only uniform by virtue of the contradiction argument. Proceeding directly from Proposition \ref{observeprop} does not work because the observability constant $C_2$ depends on the behavior of $W$ on $[t_0, t_0+T]$ and thus may change as $t_0$ changes.  
\end{remark}

\section{Proof of Proposition \ref{observeprop}}\label{observesection}
The proof of this proposition follows the standard approach for observability inequalities \cite{BardosLebeauRauch1992}, \cite{BurqGerard1997}, \cite{LRLTT}. The idea is to prove a weak version of the observability inequality, that includes some error term, and then eliminate that error term by showing there are no solutions ``invisible" to the observation function $\chi_Q$.

When the boundary conditions are Dirichlet this follows immediately from Theorem 1.8 in \cite{LRLTT}, so throughout this section it is assumed that the boundary condition is Neumann or the boundary is empty. In particular $H$ is always $H^1(\Omega)$ and so is written as such. Also define $H^{-1}(\Omega)$ as the dual of $H^1(\Omega)$. 

Before proceeding, a standard fact. The proof is delayed to the end of the section for readability. 
\begin{lemma}\label{weakconverge}
Suppose $(\psi_{0,n}, \psi_{1,n}) \in H^1(\Omega) \times L^2(\Omega)$ has a weak limit $(\psi_0, \psi_1)$ in $H^1(\Omega) \times L^2(\Omega)$. If $\psi_n$ solves \eqref{WEb} with initial data $(\psi_{0,n}, \psi_{1,n})$ and $\psi$ solves \eqref{WEb} with initial data $(\psi_0,\psi_1)$, then $\psi_n \rhu \psi$ weakly in $L^2(0,T; H^1(\Omega))$ and $\p_t \psi_n \rhu \p_t \psi$ weakly in $L^2(0,T;L^2(\Omega))$.
\end{lemma}

\begin{lemma}[Weak Observability Inequality]\label{weakobserve}
Suppose $Q$ satisfies Assumption \ref{as2}. Then there exists $C>0$, such that for all $(\psi_0, \psi_1) \in H^1(\Omega) \times L^2(\Omega)$ and $\psi$ solving \eqref{WEb} with initial data $(\psi_0, \psi_1)$, then
$$
\hp{\psi_0}{1}^2 + \ltwo{\psi_1}^2 \leq C \left( \ltwooo{\chi_Q \p_t \psi}^2 + \ltwo{\psi_0}^2 + \hp{\psi_1}{-1}^2 \right).
$$
\end{lemma}
\begin{proof}
Assume otherwise, so there exists a sequence $(\psi_{0,n}, \psi_{1,n}) \in H^1(\Omega) \times L^2(\Omega)$, such that 
\begin{align}
\hp{\psi_{0,n}}{1}^2 + \ltwo{\psi_{1,n}}^2 &=1 \label{defectcontra}\\
\limn \ltwo{\psi_{0,n}}^2 + \hp{\psi_{1,n}}{-1}^2 &= 0 \label{defectlone}\\
\limn \ltwooo{\chi_Q \p_t \psi_n}^2 &=0. \label{defectdamping}
\end{align}
Note $(\psi_{0,n}, \psi_{1,n})$ is bounded in $H^1(\Omega) \times L^2(\Omega)$, so it contains a weakly convergent subsequence. By \eqref{defectlone} and the compact embedding of $H^1(\Omega) \times L^2(\Omega)$ into $L^2(\Omega) \times H^{-1}(\Omega)$, the weak limit can only be $(0,0)$. Consider the sequence $\{\psi_n\},$ of solutions to the wave equation with initial data $(\psi_{0,n}, \psi_{1,n})$. By Lemma \ref{weakconverge}, $\psi_n$ weakly converges to $0$ in $H^1(\Omega \times (0,T))$.

Now by Appendix \ref{defect}, up to replacement of $\psi_n$ by a subsequence, there exists a microlocal defect measure $\mu$ on $S^*\hat{\Sigma},$ such that for every $R \in \Psi_b^0(\Omega \times (0,T))$
$$
\<R\psi_n, \psi_n\>_{H^1(\Omega \times (0,T))} \ra \int_{S^*(\Omega \times (0,T))} \kappa(R) d \mu,
$$
where $\kappa(R)$ is the compressed principal symbol of $R$. See Appendix \ref{defect} for details on $\Psi_b^0$ and $\kappa$. By \eqref{defectdamping}, $\mu$ vanishes on $j(T^* Q) \cap S^* \hat{\Sigma}$. Note since $G^{\infty} = \emptyset$, $\mu$ is invariant under the compressed generalized bicharacteristic flow by \cite[Lemma 2.1]{LRLTT} \cite[Section 3]{BurqLebeau2001},\cite[Section 2.2]{Lebeau1996}. The definition of the flow is given in Appendix \ref{nullbichar}. Then by Assumption \ref{as2}, $\mu$ vanishes identically on $S^* \hat{\Sigma}$. Therefore $u_n$ converges strongly to $0$ in $H^1(\Omega \times (0,T))$. Then
$$
0 = \limn \int_0^T \ltwo{\nabla \psi_n}^2 + \ltwo{\psi_n}^2 + \ltwo{\p_t \psi_n}^2  dt \geq \limn \int_0^T E(\psi_n, t) dt.
$$
Since $\psi_n$ is a solution of the wave equation $E(\psi_n,t)=E(\psi_n,0)$ for all $t$ and so 
$$
\limn E(\psi_n, 0) =\limn \frac{1}{2} \left( \ltwo{\nabla \psi_{0,n}} + \ltwo{\psi_{1,n}} \right) = 0, 
$$
this along with \eqref{defectlone} contradicts \eqref{defectcontra}. 
\end{proof}

Now define the set of invisible solutions 
$$
N_T=\{v \in H^1(\Omega \times (0,T)); \Box v =0, (v,\p_t v)|_{t=0}= (v_0, v_1) \in H^1(\Omega) \times L^2(\Omega) \text{ and } \chi_Q \p_t v=0\}.
$$
Equip it with the norm
$$
\nm{v}_{N_T}^2 = \hp{v_0}{1}^2 + \ltwo{v_1}^2,
$$
and note that if $u=v$ in $N_T$, then both $u$ and $v$ solve the wave equation with the same initial data and so $u=v$ almost everywhere on $(0,T) \times \Omega$. 

\begin{lemma}\label{invisiblelemma}
$N_T=\{c\}$, the constant functions. 
\end{lemma}
\begin{proof}
To begin note that for all $v \in N_T$, by the weak observability inequality 
$$
\nm{v}_{N_T} =\hp{v_0}{1}^2 + \ltwo{v_1}^2 \leq C \left( \ltwo{v_0}^2 + \hp{v_1}{-1}^2\right).
$$
By Rellich-Kondrachov, $H^1(\Omega) \times L^2(\Omega)$ is compactly embedded in $L^2(\Omega) \times H^{-1}(\Omega)$, this along with the above inequality implies that the unit ball in $N_T$ is compact and so $N_T$ is finite dimensional. 

Now, if $v \in N_T$, the claim is that $\p_t v \in N_T$ as well. First, since $\chi_Q \p_t v =0$ and every geodesic passes through $Q$, by propagation of singularities, \cite{MelroseSjostrand1978,ms1982}, $v$ is smooth in $\Omega \times (0,T)$. Therefore $\p_t v \in H^1(\Omega \times (0,T) )$ and $\p_t v|_{t=0} \in H^1(\Omega)$ and $\p_t^2 v|_{t=0} \in L^2(\Omega)$. It is immediate that $\Box \p_t v=0$. Finally, since $\chi_Q \p_t v=0$, $\p_t v$ is constant on the open set $Q$ and so $\chi_Q \p_t^2 v=0$ as well. 

Now quotient out by the constant functions 
$$
N_T/\{c\} = \{ [v]; v \in N_T, v_1 \sim v_2 \text{ if } v_1+c=v_2 \text{ for some constant } c\}.
$$
Since $N_T$ is finite dimensional, and $\{c\}$ is a subspace, then $N_T/\{c\}$ is also finite dimensional. Note also that $\p_t$ maps $N_T/\{c\}$ to itself. 

Now to proceed with the proof, assume $N_T \neq \{c\}$ and a contradiction will be produced. Since $N_T/\{c\}$ is finite dimensional and nonzero, $\p_t: N_T/\{c\} \ra N_T /\{c\}$ has at least one eigenvalue $\lambda$ associated to a nontrivial eigenfunction $v$. 

The first claim is that $\lambda \neq 0$. If $v \in N_T /\{c\}$ has $\p_t v=0$, then $\p_t v=c$ in $N_T$. But since $v \in N_T$, then $\chi_Q \p_t v=0$ so $\p_t v=0$. Then $v(x,t)=v(x)$ and $\Box v=0$ implies $-\Delta v=0$, so $v=c$ in $N_T$ and thus $v=0$ in $N_T/\{c\}$. So indeed $\lambda \neq 0$. 

Consider $\p_t v= \lambda v$ in $N_T/\{c\}$, so $\p_t v = \lambda v + c$ in $N_T$ for some constant $c$. Thus for some $q(x)$ 
$$
v(x,t) = e^{\lambda t}q(x) - \frac{c}{\lambda}.
$$ 
Since $(\p_t^2 -\Delta)v(x,t) =0,$ then $(\lambda^2-\Delta) q(x)=0$.

Now take any $t \in (0,T),$ such that $\omega(t) = \{x; (x,t) \in Q\}$ contains a nonempty open set. Since $\chi_Q \p_t v =0,$ then $\chi_Q q(x)=0,$ so $q(x)=0$ on the open set $\omega(t)$. Then by elliptic unique continuation, $q \equiv 0$ on $\Omega$ and so $v\equiv 0$ which is a contradiction. Thus $N_T =\{c\}$ as desired. 
\end{proof}

To complete the proof of Proposition \ref{observeprop} it remains to eliminate the error term.
\begin{proof}[Proof of Proposition \ref{observeprop}]
To begin, the inequality will be shown for solutions with initial position data having average value 0 and then the result will be extended to general initial data. So to begin assume $\int_{\Omega} \psi_0 dx=0$, and \eqref{eq:observe} will be shown. Notice that $\int_{\Omega} \psi_1 dx$ need not be $0$ and it is this case that cannot be handled directly from the existing result.

The argument proceeds by contradiction, so assume there exists a sequence $(\psi_{0,n}, \psi_{1,n}) \in H^1(\Omega) \times L^2(\Omega)$ with $\int_\Omega \psi_{0,n} dx =0$, such that 
\begin{equation}\label{defecterrorcontra}
\ltwo{\nabla \psi_{0,n}}^2+\ltwo{\psi_{1,n}}^2 =1, \qquad \limn \ltwooo{\chi_Q \p_t \psi_n} = 0.
\end{equation}
where $\psi_n$ is the solution of \eqref{WEb} with initial data $(\psi_n, \p_t \psi_n)|_{t=0}=(\psi_{0,n}, \psi_{1,n})$. 

The sequence $(\psi_{0,n},\psi_{1,n})$ is bounded in $H^1(\Omega) \times L^2(\Omega)$, so there exists a weakly convergent subsequence with limit $(\psi_0,\psi_1) \in H^1(\Omega) \times L^2(\Omega)$. Let $\psi$ solve \eqref{WEb}, with initial data $(\psi_0,\psi_1)$. Then by Lemma \ref{weakconverge}, $\p_t \psi_n \rhu \p_t \psi$ weakly in $L^2(\Omega \times (0,T))$. Therefore $\chi_Q \p_t \psi_n \rhu \chi_Q \p_t \psi$ weakly in $L^2(\Omega \times (0,T))$ and so
$$
\ltwooo{\chi_Q \p_t \psi} \leq \liminf_{n \ra \infty} \ltwooo{\chi_Q \p_t \psi} =0.
$$
Thus $\psi \in N_T$ and by Lemma \ref{invisiblelemma}, $\psi=c$, some constant $c$. So $(\psi_0, \psi_1)=(c,0)$ and $(\psi_{0,n}, \psi_{1,n})$ converges to $(c,0)$ weakly in $H^1(\Omega) \times L^2(\Omega)$.  By Rellich-Kondrachov, this convergence is strong in $L^2(\Omega) \times H^{-1}(\Omega)$. But since $\int_\Omega \psi_{0,n} dx=0$ for all $n$, then $c=0$, so $(\psi_{0,n}, \psi_{1,n})$ strongly converges to $(0,0)$ in $L^2(\Omega) \times H^{-1}(\Omega)$. Put another way 
$$
\limn \lp{\psi_{0,n}}{2} + \nm{\psi_{1,n}}_{H^{-1}} = 0. 
$$
Now combining this with Lemma \ref{weakobserve} 
$$
\ltwo{\nabla \psi_{0,n}}^2+\ltwo{\psi_{1,n}}^2 \leq \hp{\psi_{0,n}}{1}^2 + \ltwo{\psi_{1,n}}^2 
\leq C \left( \ltwooo{\chi_Q \p_t \psi_n}^2 +  \lp{\psi_{0,n}}{2} + \nm{\psi_{1,n}}_{H^{-1}} \right).
$$
By \eqref{defecterrorcontra} the left hand side equals 1 for all $n$, while the right hand side goes to $0$ as $n \ra \infty$, which is a contradiction. So \eqref{eq:observe} holds when $\int_\Omega \psi_0 dx=0$. 

When 
$$
\frac{1}{\text{vol}(\Omega)}\int_\Omega \psi_0 dx =\overline{\psi_0} \neq 0,
$$
note that if $\psi(x,t)$ solves $\Box \psi=0$ with initial data $(\psi_0,\psi_1)$, then  $\ti{\psi}(x,t)=\psi(x,t)-\overline{\psi_0}$ solves $\Box \ti{\psi}=0$ with initial data $(\psi_0-\overline{\psi_0}, \psi_1)$. The observability inequality can be applied to $\ti{\psi}$ and the constant $\overline{\psi_0}$ drops out due to the derivatives, proving \eqref{eq:observe} for general initial data.
\end{proof} 

It remains to prove the fact about weak convergence of solutions to the wave equation. 
\begin{proof}[Proof of Lemma \ref{weakconverge}]
Throughout the proof, inner products are taken over $L^2(\Omega)$. First, note $E(\psi_n, t)=E(\psi_n,0)$ for all $t$. Now
\begin{align*}
\p_t \ltwom{\psi_n(t)}^2 = 2 \int_\Omega \psi_n \p_t \psi_n dx &\leq \left( \ltwo{\p_t \psi_n(t)}^2 +  \ltwom{\psi_n(t)}^2 \right)\\
&\leq \left( E(\psi_n, t) + \ltwom{\psi_n(t)}^2\right).
\end{align*}
Therefore by Gr\"onwall's inequality
$$
\ltwom{\psi_n(t)}^2 \leq e^{t} \left( \ltwom{\psi_n(0)}^2 + \int_0^t E(\psi_n, s) ds \right). 
$$
So
\begin{align*}
\max_{0 \leq t \leq T} \left( \ltwom{\psi_n(t)}^2 + E(\psi_n, t) \right) &\leq (Te^T +1) \left( E(\psi_n, 0) + \ltwom{\psi_n(0)}^2 \right) \\
&= C \left( \hp{\psi_{0,n}}{1}^2 + \ltwo{\psi_{1,n}}^2 \right).
\end{align*}
Furthermore for any $v \in H^1(\Omega)$ with $\hp{v}{1} \leq 1$
$$
\< \p_t^2 \psi_n(t), v(t)\> + \<\nabla \psi_n(t), \nabla v(t)\> =0, 
$$
so 
$$
|\<\p_t^2 \psi_n(t), v(t)\>| \leq \ltwo{\nabla \psi_n(t)} \leq E(\psi_n,t)^{1/2}.
$$
And thus 
$$
\int_0^T \hp{\p_t^2 \psi_n(t) }{-1}^2 dt \leq T E(\psi_n, 0). 
$$
Thus there exists $C>0$ such that 
\begin{equation}
\max_{0 \leq t \leq T} \left( \ltwom{\psi_n(t)}^2 + E(\psi_n, t) \right) + \int_0^T \hp{\p_t^2 \psi_n(t)}{-1}^2 dt \leq C \left( \hp{\psi_{0,n}}{1}^2 + \ltwo{\psi_{1,n}}^2 \right).
\end{equation}
Therefore $\psi_n$ is bounded in $L^2(0,T; H^1(\Omega)), \p_t \psi_n$ is bounded in $L^2(0,T; L^2(\Omega))$ and $\p_t^2 \psi_n$ is bounded in $L^2(0,T;H^{-1}(\Omega))$. 

Thus there exists $v \in L^2(0,T; H^1(\Omega))$ such that, up to replacement by subsequences, $\psi_n \rhu v$ weakly in $L^2(0,T; H^1(\Omega)),$ $\p_t \psi_n \rhu \p_t v$ weakly in $L^2(0,T;L^2(\Omega))$ and $\p_t^2 \psi_n \rhu \p_t^2 v$ weakly in $L^2(0,T;H^{-1}(\Omega))$. The proof will be completed if it is shown that $v$ solves the wave equation and has $(v,\p_t v)|_{t=0} = (\psi_0, \psi_1)$. 

To see this let $g \in C^1([0,T]; H^1(\Omega))$. Then since $\psi_n$ solves \eqref{WEb} 
\begin{equation}\label{weakwave}
\int_0^T \< \p_t^2 \psi_n, g\> + \< \nabla \psi_n, \nabla g\> dt =0.
\end{equation}
And so in the limit as $n \ra \infty$, by the weak convergence of $\p_t^2 \psi_n$ to $\p_t^2 v$ and $\nabla \psi_n$ to $\nabla v$
$$
\int_0^T \<\p_t^2 v, g\> + \<\nabla v, \nabla g\> dt =0.
$$
That is $v$ solves \eqref{WEb}. Note also $v \in C([0,T]; L^2(\Omega)), \p_t v \in C([0,T]; H^{-1}(\Omega))$, so it makes sense to evaluate $v, \p_t v$ at $t=0$. Now choose $f \in C^2([0,T]; H^1(\Omega))$ with $f(T)=\p_t f(T)=0$. Then replacing $g$ by $f$ in \eqref{weakwave} and integrating by parts twice in $t$
$$
 \int_0^T \<\psi_n, \p_t^2 f\> + \<\nabla \psi_n, \nabla f\> dt = \< \p_t \psi_n(0), f(0)\> - \<\psi_n(0), \p_t f(0)\>.
$$
Similarly 
$$
 \int_0^T \<v, \p_t^2 f\> + \<\nabla v, \nabla f\> dt = \< \p_t v(0), f(0)\> - \<v(0), \p_t f(0)\>.
 $$
 By weak convergence of $\psi_n$ to $v$ in $L^2(0,T;H^1(\Omega))$ the left hand sides of the two preceding equations are equal after taking the limit as $n \ra \infty$. So
 $$
 \limn \< \p_t \psi_n(0), f(0)\> - \<\psi_n(0), \p_t f(0)\> = \< \p_t v(0), f(0)\> - \<v(0), \p_t f(0)\>.
 $$
 Now note $\psi_n(0) =\psi_{0,n} \rhu \psi_0$ in $H^1(\Omega)$ and $\p_t \psi_n(0) = \psi_{1,n} \rhu \psi_1$ in $L^2(\Omega)$ so
 $$
 \< \psi_1, f(0)\> - \<\psi_0, \p_t f(0)\> = \< \p_t v(0), f(0)\> - \<v(0), \p_t f(0)\>.
 $$
 Since $f(t=0), \p_t f(t=0)$ are arbitrary $(v,\p_t v)|_{t=0}=(\psi_0,\psi_1)$. Therefore $\psi=v$, by uniqueness of weak solutions of the wave equation. The desired convergence was already shown. 
\end{proof}

\appendix 
\section{Appendix A}
\subsection{Generalized Null Bicharacteristics}\label{nullbichar}
This appendix introduces the generalized bicharacteristic flow of \cite{MelroseSjostrand1978}, see also \cite[Chapter 24]{Hormander3}. The exposition is adapted from Section 1 of \cite{LRLTT}.

Let $g^*$ be the dual metric to $g$. On $T^*(M \times \Rb),$ the principal symbol of $\p_t^2 -\Delta_g$ is $p(x,t,\xi, \tau) = -\tau^2 + g_x^*(\xi,\xi)$, where $(\tau, \xi)$ are the fiber variables for $(t,x)$. The Hamilton vector field of $p$ is given by $H_p f = \{p,f\}$. Classical bicharacteristics are the integral curves of $H_p$ in $\Char(p) = \{p=0\}$. For $\p_t^2-\Delta_g$, the projection of classical bicharacteristics onto $M$, using $t$ as a parameter, are exactly unit speed geodesics on $M$. 

Define $Y = \Rb \times \Omegab$ and $\Char_Y(p) = \{\rho=(x,t,\xi,\tau) \in T^*(M \times \Rb)\backslash 0; \, x \in \Omegab \text{ and } p(\rho)=0\}$. Let $M$ have dimension $d$. Close to the boundary of $\Omega$ use geodesic normal coordinates $(x', x_d)$ where $x'=(x_1, x_2, \ldots, x_{d-1})$. So $x_d=0$ at $\p\Omega$ and $x_d>0$ in $\mathring{\Omega}$. Set $y=(t,x), y'=(t,x')$ and $y_n=x_d$ where $n=d+1$, which provide coordinates near $\p Y= \p \Omega \times \Rb$. Let $\eta=(\eta', \eta_n)$ be the cotangent variables associated to $y=(y',y_n)$. In these coordinates, the principal symbol of the wave operator is
$$
p(y', y_n, \eta', \eta_n) = \eta_n^2 + r(y, \eta'),
$$
where $r$ is a smooth $y_n$-family of tangential differential symbols.  Define $T^* Y= T^* (\Omega \times \Rb)$ and its boundary
$$
\p T^* Y = \{ \rho = (y, \eta) \in T^*(\Rb \times \Omega); y_n =0\}.
$$
Use $r_0$ to denote the restriction of $r$ to $\p T^* Y$, that is, $r_0(y', \eta') = r(y', y_n=0, \eta')$. 

Define $\Sigma_0 = \{ \rho=(y',y_n,\eta', \eta_n) \in \Char_Y(p); y_n=0\}$ or $\Sigma_0 = \Char_Y(p) \cap \p T^* Y$. Using local coordinates define the glancing set $G \subset \Sigma_0$ by 
$$
G= \{(y', y_n=0, \eta', \eta_n) \in \Sigma_0; \eta_n=0\}.
$$
The glancing set can also be decomposed as $G=G^2 \supset G^3 \supset \cdots \supset G^{\infty}$, with $\rho=(y', y_n=0, \eta', \eta_n) \in G^{k+2}$ when 
$$
\eta_n=r_0(y',\eta' )=0, \quad H_p^j y_n =0, \quad 0 \leq j <k.
$$
Finally, consider $G^2 \backslash G^3$, the glancing set of order precisely 2, and define subsets of it, the diffractive set $G_d^2$ and the gliding set $G_g^2$, as 
$$
\rho \in G^2_d \, \,(\text{resp. } G_g^2) \iff \rho \in G^2 \backslash G^3 \,  \text{ and } \, H_p^2 y_n< 0 \, \,(\text{resp.} >0).
$$
Then $G^2 \backslash G^3 = G_d^2 \cup G_g^2$. This decomposition can be continued for higher even orders of glancing points, but is not needed in this paper. 
\begin{definition}\label{genbichar}
A generalized bicharacteristic of $p$ is a differentiable map 
$$
\Rb \backslash \mathcal{B} \ni s \mapsto \gamma(s) \in (\Char_Y(p) \backslash \Sigma_0) \cup G,
$$
satisfying the following properties
\begin{enumerate}
	\item $\gamma'(s) = H_p(\gamma(s))$ if $\gamma(s) \in \Char_Y(p) \backslash \Sigma_0$ or $\gamma(s) \in G^2_d$ 
	\item $\gamma'(s) = H_{r_0}(\gamma(s))$ if $\gamma(s) \in G \backslash G^2_d$. 
	\item  Every $s_0 \in \mathcal{B}$ is isolated, and there exists $\d>0$, such that for $s \in (s_0-\d, s_0) \cup (s_0, s_0+\d)$ then $\gamma(s) \in \Char_Y(p) \backslash \Sigma_0$. Furthermore, the limits $\lim_{s \ra s_0^{\pm}} \gamma(s)=(y^{\pm}, \eta^{\pm})$ exist and $y_n^-=y_n^+ =0, y^-{}'=y^+{}', \eta^-{}'=\eta^+{}',$ and $\eta_n^-=-\eta_n^+$. 
\end{enumerate}
\end{definition}
In case 1 the generalized bicharacteristic is either in the interior, or at a diffractive point. Here it coincides with a segment of a classical bicharacteristic. Case 2 describes how a generalized bicharacteristic enters or leave the boundary $\p T^* Y$ or locally remains in it. Case 3 describes reflections, when a bicharacteristic transversally encounters the boundary.  

For $y$ near the boundary of $Y$, define ${}^b T_y Y$ to be the tangent vector field generated by $\p_{y'}$ and $y_n \p_{y_n}$. Then define the compressed cotangent bundle ${}^bT^*Y=\bigcup_{y \in Y} ({}^b T_yY)^*$, and define the compression map
\begin{align*}
&j: T^* Y \ra {}^b T^* Y, \\
&(y, \eta', \eta_n) \mapsto (y, \eta', y_n\eta_n).
\end{align*}
Note that 
\begin{itemize}
	\item for $y \in \Rb \times \Omega,$ then ${}^bT^*_yY=j(T^*_y Y)$ is isomorphic to $T^*_y=T^*_y(\Omega \times \Rb)$,
	\item for $y \in \Rb \times \p \Omega,$ then ${}^b T^*_y Y= j(T^*_y Y)$ is isomorphic to $T^*_y(\p \Omega \times \Rb)$.
\end{itemize}
The set of points $(y', y_n=0, \eta', 0) \in {}^b T^* Y|_{y_n=0}$ such that $r_0(y', \eta')>0$ is called the elliptic set $E$. Also set $\hat{\Sigma} = j(\Char_Y(p)) \cup E$ and take the cosphere quotient space $S^* \hat{\Sigma} = \hat{\Sigma}/(0,+\infty)$. This is needed in the defect measure construction. 

Define compressed generalized bicharacteristics to be the image under $j$ of the generalized bicharacteristics of Definition \ref{genbichar}. If ${}^b \gamma=j(\gamma)$ is a compressed generalized bicharacteristic, then ${}^b \gamma:\Rb \ra {}^b T^* Y \backslash E$ is a continuous map. Using $t$ as a parameter,  projecting compressed generalized bicharacteristics down to $M$ gives unit speed generalized geodesics for $\Omega$. Generalized geodesics remain in $\Omegab$.  In geometric optics the standard terminology for such a projection is ``ray". 

An important feature of compressed generalized bicharacteristics, as shown in \cite{MelroseSjostrand1978}, is the following proposition. 
\begin{proposition}
A compressed generalized bicharacteristic with no point in $G^{\infty}$ is uniquely determined by any one of its points.
\end{proposition}

\subsection{Defect Measure with Boundary}\label{defect}
\begin{definition}
Define $\Psi_b^m(Y)$ to be made up of operators of the form $R=R_{\inter} + R_{\tan}$ where $R_{\inter}$ is a classical pseudodifferential opperator of order $m$, with compact support in $\Rb \times \Omega,$ and $R_{\tan}$ is a classical tangential pseudodifferential operator of order $m$. In the local normal coordinates introduced in Appendix \ref{nullbichar}, $R_{\tan}$ acts only in the $y'$ variables.  
\end{definition}
Let $\sigma(R_{\inter})$ and $\sigma(R_{\tan})$ be the homogeneous principal symbols of $R_{\inter}$ and $R_{\tan}$ respectively. The restrictions $\sigma(R_{\inter})|_{\Char_Y(p)}$ and $\sigma(R_{\tan})|_{\Char_Y(p) \cup T^* (\Rb \times \p \Omega)}$ make sense, and under the compression map $j: T^* Y \ra {}^b T^* Y$ 
$$
j\left(\sigma(R_{\inter})|_{\Char_Y(p)} + \sigma(R_{\tan})|_{\Char_Y(p) \cup T^* (\Rb \times \p \Omega)} \right) = :\kappa(R),
$$ 
is a continuous function on $\hat{\Sigma}= j (\Char_Y(p)) \cup E$. Furthermore, by the homogeneity of the symbols $\kappa(R)$ is a continuous function on $S^* \hat{\Sigma} = \hat{\Sigma} / (0, \infty)$. Then by \cite[Section 2.1]{Lebeau1996} and \cite[Proposition 2.5]{BurqLebeau2001}
\begin{proposition}
Suppose $\{u_n\}$ is a bounded sequence in $H^1(\Omega\times \Rb)$. If $(\p_t^2 -\Delta_g) u_n=0$ and $u_n$ weakly converges to $0$, then there exists a subsequence $\{u_{n_j}\}$ and a positive measure $\mu$ on $S^* \hat{\Sigma}$ such that for any $R \in \Psi^0(Y)$
$$
\<Ru_{n_j}, u_{n_j}\>_{H^1(\Omega \times \Rb)} \ra \<\mu, \kappa(R)\>.
$$ 
\end{proposition}
This is a generalization of \cite{Gerard1991} and \cite{Tartar1990}.
\bibliographystyle{alpha}
\bibliography{mybib}

\begin{thebibliography}{LRLTT17}

\bibitem[BG97]{BurqGerard1997}
N.~Burq and P.~G\'erard.
\newblock Condition n\'ecessaire et suffisante pour la contr\^olabilit\'e
  exacte des ondes.
\newblock {\em C. R. Math. Acad. Sci. Paris}, 325(7):749--752, 1997.

\bibitem[BL01]{BurqLebeau2001}
N.~Burq and G.~Lebeau.
\newblock Mesures de défaut de compacité, application au système de lamé.
\newblock {\em Annales Scientifiques de l’École Normale Supérieure},
  34(6):817--870, 2001.

\bibitem[BLR92]{BardosLebeauRauch1992}
C.~Bardos, G.~Lebeau, and J.~Rauch.
\newblock Sharp sufficient conditions for the observation, control, and
  stabilization of waves from the boundary.
\newblock {\em SIAM Journal on Control and Optimization}, 30(5):1024--1065,
  1992.

\bibitem[G{\'e}r91]{Gerard1991}
P.~G{\'e}rard.
\newblock Microlocal defect measures.
\newblock {\em Communications in Partial Differential Equations},
  16(11):1761--1794, 1991.

\bibitem[Har89]{Haraux1989}
A.~Haraux.
\newblock Une remarque sur la stabilisation de certains systemes du deuxieme
  ordre en temps.
\newblock {\em Portugaliae mathematica}, 46(3):245--258, 1989.

\bibitem[H{\"o}r07]{Hormander3}
L.~H{\"o}rmander.
\newblock {\em The Analysis of Linear Partial Differential Operators III}.
\newblock Springer Berlin, 2007.

\bibitem[HW08]{HirosawaWirth2008}
F.~Hirosawa and J.~Wirth.
\newblock Cm-theory of damped wave equations with stabilisation.
\newblock {\em Journal of mathematical analysis and applications},
  343(2):1022--1035, 2008.

\bibitem[KK11]{Kenigson2011}
J.~Kenigson and J.~Kenigson.
\newblock Energy decay estimates for the dissipative wave equation with
  space--time dependent potential.
\newblock {\em Mathematical methods in the applied sciences}, 34(1):48--62,
  2011.

\bibitem[KK23]{KeelerKleinhenz2023}
B.~Keeler and P.~Kleinhenz.
\newblock Sharp exponential decay rates for anisotropically damped waves.
\newblock {\em Annales Henri Poincar{\'e}}, 24(5):1561--1595, 2023.

\bibitem[Leb96]{Lebeau1996}
G.~Lebeau.
\newblock Equation des ondes amorties.
\newblock In {\em Algebraic and Geometric Methods in Mathematical Physics:
  Proceedings of the Kaciveli Summer School, Crimea, Ukraine, 1993}, pages
  73--109. Springer Netherlands, Dordrecht, 1996.

\bibitem[LRLTT17]{LRLTT}
J.~Le~Rousseau, G.~Lebeau, P.~Terpolilli, and E.~Tr{\'e}lat.
\newblock Geometric control condition for the wave equation with a
  time-dependent observation domain.
\newblock {\em Analysis \& PDE}, 10(4):983--1015, 2017.

\bibitem[Mat77]{Matsumura1977}
A.~Matsumura.
\newblock Energy decay of solutions of dissipative wave equations.
\newblock {\em Proceedings of the Japan Academy, Series A, Mathematical
  Sciences}, 53(7):232--236, 1977.

\bibitem[MN01]{MochizukiNakazawa2001}
K.~Mochizuki and H.~Nakazawa.
\newblock Energy decay of solutions to the wave equations with linear
  dissipation localized near infinity.
\newblock {\em Publications of the Research Institute for Mathematical
  Sciences}, 37(3):441--458, 2001.

\bibitem[Moc76]{Mochizuki1976}
K.~Mochizuki.
\newblock Scattering theory for wave equations with dissipative terms.
\newblock {\em Publications of the Research Institute for Mathematical
  Sciences}, 12(2):383--390, 1976.

\bibitem[MS78]{MelroseSjostrand1978}
R.B. Melrose and J.~Sj{\"o}strand.
\newblock Singularities of boundary value problems. i.
\newblock {\em Communications on Pure and Applied Mathematics}, 31(5):593--617,
  1978.

\bibitem[MS82]{ms1982}
R.B. Melrose and J.~Sj{\"o}strand.
\newblock Singularities of boundary value problems. ii.
\newblock {\em Communications on Pure and Applied Mathematics}, 35(2):129--168,
  1982.

\bibitem[PS19]{PaunonenSeifert2019}
L.~Paunonen and D.~Seifert.
\newblock Asymptotics for periodic systems.
\newblock {\em Journal of Differential Equations}, 266(11):7152--7172, 2019.

\bibitem[Ral69]{Ralston1969}
J.~Ralston.
\newblock Solutions of the wave equation with localized energy.
\newblock {\em Communications on Pure and Applied Mathematics}, 22(6):807--823,
  1969.

\bibitem[RT75a]{RauchTaylor1975b}
J.~Rauch and M.~Taylor.
\newblock Decay of solutions to nondissipative hyperbolic systems on compact
  manifolds.
\newblock {\em Communications on Pure and Applied Mathematics}, 28(4):501--523,
  1975.

\bibitem[RT75b]{RauchTaylor1975}
J.~Rauch and M.~Taylor.
\newblock Exponential decay of solutions to hyperbolic equations in bounded
  domains.
\newblock {\em Indiana Univ. Math. J.}, 24(1):79--86, 1975.

\bibitem[Rud76]{BabyRudin}
W.~Rudin.
\newblock {\em Principles of mathematical analysis}, volume~3.
\newblock McGraw-hill New York, 1976.

\bibitem[Sha19]{Shao2019}
A.~Shao.
\newblock On carleman and observability estimates for wave equations on
  time-dependent domains.
\newblock {\em Proceedings of the London Mathematical Society},
  119(4):998--1064, 2019.

\bibitem[Tar90]{Tartar1990}
L.~Tartar.
\newblock H-measures, a new approach for studying homogenisation, oscillations
  and concentration effects in partial differential equations.
\newblock {\em Proceedings of the Royal Society of Edinburgh Section A:
  Mathematics}, 115(3-4):193–230, 1990.

\bibitem[Ues80]{Uesaka1980}
H.~Uesaka.
\newblock The total energy decay of solutions for the wave equation with a
  dissipative term.
\newblock {\em Journal of Mathematics of Kyoto University}, 20(1):57--65, 1980.

\bibitem[VJdL21]{vjdl}
E.C. Vargas~Junior and C.R. da~Luz.
\newblock $\sigma$-evolution models with low regular time-dependent effective
  structural damping.
\newblock {\em Journal of Mathematical Analysis and Applications},
  499(2):125030, 2021.

\bibitem[Wir04]{Wirth2004}
J.~Wirth.
\newblock Solution representations for a wave equation with weak dissipation.
\newblock {\em Mathematical methods in the applied sciences}, 27(1):101--124,
  2004.

\bibitem[Wir06]{Wirth2006}
J.~Wirth.
\newblock Wave equations with time-dependent dissipation i. non-effective
  dissipation.
\newblock {\em Journal of Differential Equations}, 222(2):487--514, 2006.

\bibitem[Wir07]{Wirth2007}
J.~Wirth.
\newblock Wave equations with time-dependent dissipation ii. effective
  dissipation.
\newblock {\em Journal of Differential Equations}, 232(1):74--103, 2007.

\bibitem[Wir08]{wirthperiodic}
J.~Wirth.
\newblock On the influence of time-periodic dissipation on energy and
  dispersive estimates.
\newblock {\em Hiroshima mathematical journal}, 38(3):397--410, 2008.

\end{thebibliography}

\end{document}